\newtheorem{theorem}{Theorem}[section]
\newtheorem{corollary}[theorem]{Corollary}
\theoremstyle{definition}
\newtheorem{example}[theorem]{Example}
\newtheorem{remark}[theorem]{Remark}
\begin{document}

\title[\scriptsize{Cartesian Symmetry Classes}]{Cartesian Symmetry Classes associated with certain subgroups of $S_m$}

\author[Seyyed Sadegh Gholami and Yousef Zamani]{Seyyed Sadegh Gholami and Yousef Zamani$^{\ast }$}

\address{Department of Mathematics, Faculty of Basic Sciences, Sahand University of Technology, Tabriz, Iran}
\email{s\_gholami98@sut.ac.ir}

\address{Department of Mathematics, Faculty of Basic Sciences, Sahand University of Technology, Tabriz, Iran}
\email{zamani@sut.ac.ir}
\subjclass[2020]{20C15, 15A69}
\thanks{$^{\ast}$ Corresponding author}

\keywords{Irreducible characters, Cartesian symmetry classes, orthogonal basis, cyclic group, Dihedral group, Ramanujan sum}
\maketitle
\begin{abstract}
Let $V$ be an $n$-dimensional inner product space. Assume $G$ is a subgroup of the symmetric group of degree $m$, and $\lambda$ is an irreducible character of $G$. Consider the \emph{Cartesian symmetrizer} $C_{\lambda}$ on the Cartesian space $\times^{m}V$ defined by
\[
C_{\lambda} = \frac{\lambda(1)}{|G|}\sum_{\tau\in G} \lambda(\tau) Q(\tau).
\]
The vector space
$
V^{\lambda}(G) = C_{\lambda}(\times^{m}V)
$
is called the Cartesian symmetry class associated with $G$ and $\lambda$.

In this paper, we give a formula for the dimension of the cyclic subspace $V^{\lambda}_{ij}$. Then we discuss the problem existing an $O$-basis for the Cartesian symmetry class $V^{\lambda}(G)$. Also, we compute the dimension of the symmetry class $V^{\lambda}(G)$ when $G = \langle \sigma_{1} \sigma_{2} \cdots \sigma_{p} \rangle$ or $G = <\sigma_{1}><\sigma_{2}> \cdots <\sigma_{k}>$, where $\sigma_i$ are disjoint cycles in $S_{m}$. The dimensions are expressed in terms of the Ramanujan sum.

Additionally, we provide a necessary and sufficient condition for the existence of an $O$-basis  for Cartesian symmetry classes associated with the irreducible characters of the dihedral group $D_{2m}$. The dimensions of these classes are also computed.
\end{abstract}
\hspace{-1cm}\rule{\textwidth}{0.2mm}
\section{\bf Introduction and Preliminaries}
The Cartesian symmetry class associated with a subgroup $G$ of the symmetric group $S_m$ and a mapping $f:G\rightarrow \mathbb{C}$ was initially explored by Tian-Gang Lei in \cite{Lei}. The dimension of this Cartesian symmetry class, corresponding to an irreducible character of the subgroup $G$ in $S_{m}$, has been computed in terms of the fixed point character of $S_m$ in \cite{Zamani 2}. Additionally, a formula expressing the dimension of the Cartesian symmetry class in terms of the rank of an idempotent matrix. The article also deduces certain properties of generalized trace functions of a matrix.

In this research, we give a formula for the dimension of the cyclic subspace $V^{\lambda}_{ij}$. Then we discuss the problem existing an $O$-basis for the Cartesian symmetry class $V^{\lambda}(G)$. Also, we compute the dimension of the symmetry class $V^{\lambda}(G)$ in terms of the Ramanujan sum when $G = \langle \sigma_{1} \sigma_{2} \cdots \sigma_{p} \rangle$ or $G = <\sigma_{1}><\sigma_{2}> \cdots <\sigma_{k}>$, where $\sigma_i$ are disjoint cycles in $S_{m}$. Additionally, we provide a necessary and sufficient condition for the existence of an $O$-basis  for Cartesian symmetry classes associated with the irreducible characters of the Dihedral group $D_{2m}$. The dimensions of these classes are also computed.

We commence with a review of Cartesian symmetry classes. For more details, we refer the reader to \cite{Lei,Zamani 2}.
 
Let $V$ be a complex inner product space of dimension $n \geq 2$. Consider $\mathbb{E}=\{ f_{1}, \ldots , f_{n}\}$ as an orthonormal basis of $V$. Let  $\times^{m}V$ be the Cartesian product of $m$-copies of $V$.
We have an induced inner product of $\times^{m}V$, which is defined by
$$
< z^{\times} , w^{\times}> = \sum_{i=1}^{m}<z_{i} , w_{i}>,
$$
where 
$$
z^{\times}=(z_{1},\ldots,z_{ m}),\  w^{\times}=(w_{1},\ldots,w_{m}).
$$

For every $1\leq i \leq n ,\ 1\leq j \leq m$, we define
 $$
f_{ij}=(\delta_{1j}f_{i},\delta_{2j}f_{i},\ldots , \delta_{mj}f_{i}) \in \times^{m}V.
$$
Then  
$$
\mathbb{E}^{\times}=\lbrace f_{ij}~|~ i=1,2,\ldots ,n , j=1,2,\ldots , m \rbrace
$$ 
is an orthonormal basis of  $\times^{m}V$.\\

Let $G$ be a subgroup of the symmetric group $S_m$. For any $\tau \in G$, we define the Cartesian permutation linear operator
 \begin{equation*}
   Q(\tau):\times^{m}V\longrightarrow \times^{m}V
\end{equation*}
by
\begin{equation*}
 Q(\tau)(u_1,\ldots , u_m )=(u_{\tau^{-1}(1)},\ldots, u_{\tau^{-1}(m)}).
 \end{equation*}
It is easy to see that $Q:\tau \rightarrow Q(\tau)$ defines a faithful unitary representation of $G$ over 
$\times^{m}V$.\\

Let $\lambda$ be a complex irreducible character of $G$. Define the \emph{Cartesian symmetrizer} $C_{\lambda}$ as follows:
\[
C_{\lambda }=\dfrac{\lambda(1)}{|G|}\sum_{\tau\in G} \lambda(\tau) Q(\tau).
\]
It has been shown \cite{Zamani 2} that $C_{\lambda}$ is an orthogonal projection on $\times^{m}V$, i.e., $C_{\lambda}^2= C_{\lambda}^\ast=  C_{\lambda}$.

The range of the linear operator $C_{\lambda}$ is called \emph{the Cartesian symmetry class associated with $G$ and $\lambda$}, denoted by $V^{\lambda}(G)$. It is easy to see that 
\[
\times^{m}V=\sum_{\lambda \in Irr(G)}^{\bot} {V^{\lambda}(G)}~ (\text{orthogonal direct sum}).
\]
The elements 
\[
u^{\lambda}=C_{\lambda}(u_{1},\ldots,u_{m})
\]
of $V^{\lambda}(G)$ are called \emph{$\lambda$-symmetrized vectors}. Note that $V^{\lambda}(G)$ is spanned by the standard $\lambda$-symmetrized vectors 
\[
f^{\lambda}_{ij}=C_{ \lambda}(f_{ij})~(i=1,2,\ldots, n, j=1,2,\ldots, m).
\]

\begin{theorem}\cite{Zamani 2}\label{p1} 
Let $G$ be a subgroup of the symmetric group $S_m$ and $\lambda \in \text{Irr}(G)$. Then for each $1\leq i,r\leq n, 1\leq j,s\leq m$, we have
\begin{eqnarray*}
\langle f_{ij}^{\lambda} ,f_{rs}^{\lambda}\rangle&=&
\dfrac{\lambda(1)^2}{|G|^2}\sum_{\sigma , \tau\in G} \lambda(\sigma) {\overline{\lambda(\tau)}}{\delta_{ir}}\delta_{\sigma^{-1}(j),\tau^{-1}(s)}\\
&=& \begin{cases}
   0    &\text{if $i\neq r$ or $j\nsim s$ }\\
     \frac{\lambda (1)}{|G|}\sum_{g\in G_{j}}\lambda(g\tau^{-1})             &\text{if $s=\tau(j)$}
  \end{cases}.
\end{eqnarray*}
In particular
\begin{eqnarray}\label{Norm}
 \parallel f_{ij}^{\lambda}\parallel^2=\dfrac{\lambda(1)[\lambda,1_{G_j}]}{{[G:G_j]}},
\end{eqnarray}
 where $G_{j}$ is the stabilizer of $j$ in $G$  and $[\ ,\ ]$ the inner product of characters \cite{Isaacs}.
\end{theorem}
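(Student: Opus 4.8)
The plan is to do everything explicitly in the orthonormal basis $\mathbb{E}^{\times}$. The first step is to record the action of $Q(\tau)$ on the standard vectors: comparing $k$-th components, the $k$-th entry of $Q(\tau)f_{ij}$ is $\delta_{\tau^{-1}(k),j}f_i$, which equals $f_i$ exactly when $k=\tau(j)$ and vanishes otherwise, so $Q(\tau)f_{ij}=f_{i,\tau(j)}$. Consequently
\[
f_{ij}^{\lambda}=C_{\lambda}f_{ij}=\frac{\lambda(1)}{|G|}\sum_{\sigma\in G}\lambda(\sigma)\,f_{i,\sigma(j)} .
\]

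Next I would expand the inner product bilinearly and apply $\langle f_{ab},f_{cd}\rangle=\delta_{ac}\delta_{bd}$ to get
\[
\langle f_{ij}^{\lambda},f_{rs}^{\lambda}\rangle
=\frac{\lambda(1)^2}{|G|^2}\sum_{\sigma,\tau\in G}\lambda(\sigma)\overline{\lambda(\tau)}\,\delta_{ir}\,\delta_{\sigma(j),\tau(s)} ,
\]
and the relabelling $\sigma\mapsto\sigma^{-1}$, $\tau\mapsto\tau^{-1}$ together with $\lambda(g^{-1})=\overline{\lambda(g)}$ puts the sum into the form displayed in the statement. To reach the case formula it is cleaner to use that $C_{\lambda}$ is an orthogonal projection (the cited identity $C_{\lambda}^2=C_{\lambda}^{\ast}=C_{\lambda}$), so that $\langle f_{ij}^{\lambda},f_{rs}^{\lambda}\rangle=\langle C_{\lambda}f_{ij},f_{rs}\rangle$; pairing with the basis vector $f_{rs}$ collapses one of the two group sums and leaves
\[
\langle f_{ij}^{\lambda},f_{rs}^{\lambda}\rangle=\frac{\lambda(1)}{|G|}\,\delta_{ir}\sum_{\substack{\sigma\in G\\ \sigma(j)=s}}\lambda(\sigma).
\]

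The case analysis is then the heart of the bookkeeping. If $i\neq r$ the Kronecker delta kills everything. If $j$ and $s$ lie in different $G$-orbits (i.e.\ $j\nsim s$) then no $\sigma\in G$ satisfies $\sigma(j)=s$, the index set is empty, and the value is $0$. If $s=\tau(j)$ for some $\tau\in G$, then $\{\sigma\in G:\sigma(j)=s\}=\tau G_{j}$, a left coset of the stabilizer $G_j$; writing $\sigma=\tau g$ with $g\in G_{j}$ gives $\frac{\lambda(1)}{|G|}\sum_{g\in G_{j}}\lambda(\tau g)$, which equals $\frac{\lambda(1)}{|G|}\sum_{g\in G_{j}}\lambda(g\tau^{-1})$ since $\lambda$ is a class function. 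I would also point out that the value is independent of the chosen $\tau$ with $\tau(j)=s$: replacing $\tau$ by $\tau h$ with $h\in G_j$ merely reindexes the sum over $G_j$, so the formula is well posed.

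Finally, the norm identity \eqref{Norm} is the specialisation $i=r$, $j=s$, $\tau=1$: it yields $\parallel f_{ij}^{\lambda}\parallel^2=\frac{\lambda(1)}{|G|}\sum_{g\in G_{j}}\lambda(g)$, and recognising $\frac{1}{|G_j|}\sum_{g\in G_j}\lambda(g)=[\lambda|_{G_j},1_{G_j}]=[\lambda,1_{G_j}]$ while $|G_j|/|G|=1/[G:G_j]$ finishes it. The only genuine obstacle is the opening move — correctly identifying $Q(\tau)f_{ij}=f_{i,\tau(j)}$ and the ensuing coset description of $\{\sigma:\sigma(j)=s\}$; once those are in place the rest is a short computation using orthonormality of $\mathbb{E}^{\times}$ and the class-function property of $\lambda$.
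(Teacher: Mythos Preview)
The paper does not supply its own proof of this theorem; it is quoted from \cite{Zamani 2}. So there is nothing to compare against, and I can only comment on the correctness of your argument.

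Your overall strategy is the right (and standard) one: identify $Q(\tau)f_{ij}=f_{i,\tau(j)}$, expand $C_\lambda$, use that $C_\lambda$ is a self-adjoint idempotent to collapse one group sum, and then read off the coset $\{\sigma:\sigma(j)=s\}=\tau G_j$. The derivation of the norm identity \eqref{Norm} is clean and correct.

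There is, however, one genuine slip. From $\sigma=\tau g$ with $g\in G_j$ you correctly obtain $\sum_{g\in G_j}\lambda(\tau g)$, but the subsequent claim that this equals $\sum_{g\in G_j}\lambda(g\tau^{-1})$ ``since $\lambda$ is a class function'' is false: conjugacy gives $\lambda(\tau g)=\lambda(g\tau)$, not $\lambda(g\tau^{-1})$. A quick check with $G=\langle(123)\rangle$, $\lambda(r)=e^{2\pi i/3}$, $j=1$, $s=2$ shows $\langle f_{i1}^\lambda,f_{i2}^\lambda\rangle=\omega/3$, which matches your $\tfrac{\lambda(1)}{|G|}\sum_{g\in G_j}\lambda(\tau g)$ but \emph{not} the displayed $\tfrac{\lambda(1)}{|G|}\sum_{g\in G_j}\lambda(g\tau^{-1})=\omega^2/3$. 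In other words, your intermediate formula is the correct one and the $\tau^{-1}$ in the quoted statement is a misprint; do not bend your argument to match it. The same issue affects your relabelling for the first double sum: substituting $\sigma\mapsto\sigma^{-1}$, $\tau\mapsto\tau^{-1}$ turns $\lambda(\sigma)\overline{\lambda(\tau)}\,\delta_{\sigma(j),\tau(s)}$ into $\overline{\lambda(\sigma)}\lambda(\tau)\,\delta_{\sigma^{-1}(j),\tau^{-1}(s)}$, which is not the displayed expression either. Again, the same cyclic example shows the displayed double sum evaluates to $\omega^2/3$, so the discrepancy lies in the statement, not in your computation. None of this affects the vanishing case or the norm formula, which are the parts actually used later in the paper.
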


Let $\mathcal{D}$ be a set of representatives of orbits of the action of $G$ on the set $\mathbf{I}_m=\{ 1,2,\cdots, m \}$ and
$
\mathcal{O}=\lbrace j ~|~ 1\leq j\leq m,\ [\lambda , 1_{G_j}]\neq 0 \rbrace.
$
We put
$\bar {\mathcal{D}} =\mathcal{D} \cap \mathcal{O}$.
It is easy to see that
the set
 $$
\lbrace f^{\lambda}_{ij}~\mid ~ 1\leq i \leq n , j \in \bar D \rbrace
 $$
 is an orthogonal set of non-zero vectors in $V^\lambda (G)$. Notice that the above set may not be a basis for $V^\lambda (G)$.\\
For any $ 1\leq i \leq n$ and  $ j \in \bar{\mathcal{D}}$, define the cyclic subspace
$$
V^{\lambda}_{ij} = \langle f^{\lambda}_{i \sigma( j)}\mid  \sigma \in G \rangle.
$$
Clearly
\begin{eqnarray}\label{ODS}
V^\lambda (G) =\sum_{1\leq i\leq n,~j\in \bar{\mathcal{D}}}^{\bot} {V^{\lambda}_{ij}}
\end{eqnarray}
the orthogonal direct sum of the cyclic subspaces $V^{\lambda}_{ij}\ (1\leq i\leq n,\ j\in \bar{\mathcal{D}})$. If $\lambda$ is a linear character of $G$ and 
$ j \in \bar{\mathcal{D}}$, then it is easy to see that $f^{\lambda}_{i \sigma( j)}=\lambda(\sigma^{-1})f^{\lambda}_{i j}$, so $\dim V^{\lambda}_{ij}=1$ and the set 
$$
\lbrace f^{\lambda}_{ij}\mid 1\leq i \leq n , j \in \bar{\mathcal{D}} \rbrace
 $$
is an orthogonal basis of  $V^\lambda (G)$. Therefore, we have  $\dim V^{\lambda}(G) = (\dim V) |\bar{ \mathcal{D}}|$. For example we have
$$
\dim V^{1}(S_{m})=\dim V,\ \ \ 
  \dim V^{\varepsilon}(S_{m}) = 
  \begin{cases}
   \dim V    &\text{if $m=2$}\\
     0             &\text{if $m \geq 3$}
  \end{cases},
$$
where $1$ and $\varepsilon$ are the principal and the alternating character of $S_m$, respectively.

Suppose $\lambda$ is a non-linear irreducible character of $G$. We will now proceed to construct a basis for $V^{\lambda}(G)$. For each $j\in\bar{\mathcal{D}}$, we choose the set $\{ j_{1}, \ldots, j_{s_{j}} \}$ from the orbit of $j$ such that
$
\{f^{\lambda}_{ij_{1}},  \ldots , f^{\lambda}_{ij_{s_j}} \}
$
forms a basis for the cyclic subspace $V^{\lambda}_{ij}$. This procedure is executed for each $k\in\bar{\mathcal{D}}$. If $\bar{\mathcal{D}} =\{j, k, l, \ldots \}$ with $j<k<l< \ldots$, then we define
$$
\hat{\mathcal{D}}=\{j_{1}, \ldots, j_{s_{j}}; k_{1},\ldots, k_{s_{k}}, \ldots\}
$$
with an indicated order. Consequently,
 $$
\mathbb{E}^{\lambda}=\{ f^{\lambda}_{ij}~|~1\leq i\leq n,\  j\in \hat{\mathcal{D}}\}
$$
forms a basis for $V^\lambda (G)$. Note that this basis may not be orthogonal. Therefore,
$$
\dim V^\lambda (G)=(\dim V)|\hat{\mathcal{D}}|.
$$
If the subspace $W$ of $V^{\lambda}(G)$ possesses a basis consisting of orthogonal standard $\lambda$-symmetrized vectors, we will refer to $W$ as having an $O$-basis.

In section 2 we give a formula for the dimension of the cyclic subspace $V^{\lambda}_{ij}$. Then we discuss the problem existing  $O$-basis for $V^{\lambda}(G)$. In section 3, we determine the dimensions of Cartesian symmetry classes associated with a cyclic subgroup of the symmetric group $S_m$ (generated by a product of disjoint cycles) and the product of cyclic subgroups of $S_m$. These dimensions are expressed in terms of the Ramanujan sum. \\
In Section 4, we establish a necessary and sufficient condition for the existence of an $O$-basis for Cartesian symmetry classes associated with the irreducible characters of the dihedral group. Additionally, we compute the dimensions of these classes.
\section{\bf Orthogonal bases of Cartesian symmetry classes}
In this section we discuss the problem existing $O$-basis for $V^{\lambda}(G)$ consisting of standard $\lambda$-symmetrized vectors. We first prove the following theorem.
\begin{theorem}\label{cyclic subspace}
For every $1\leq i\leq n$ and $j\in \bar{\mathcal{D}}$, we have
$$
\dim V^{\lambda}_{ij}=\lambda(1)[ \lambda , 1_{G_{j}}].
$$
 \end{theorem}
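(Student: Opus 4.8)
The plan is to identify $V^{\lambda}_{ij}$ with the image of $C_{\lambda}$ restricted to a suitable $G$-invariant subspace, and then to read off its dimension as the trace of an idempotent. First I would record the elementary fact that $Q(\sigma)f_{ij}=f_{i\sigma(j)}$ for every $\sigma\in G$: comparing $k$-th components, the $k$-th component of $Q(\sigma)f_{ij}$ is $\delta_{\sigma^{-1}(k),j}f_i=\delta_{k,\sigma(j)}f_i$. Hence $f^{\lambda}_{i\sigma(j)}=C_{\lambda}Q(\sigma)f_{ij}$, and by linearity of $C_{\lambda}$,
$$
V^{\lambda}_{ij}=C_{\lambda}\big(W_{ij}\big),\qquad W_{ij}:=\big\langle\, f_{ik}\;\big|\;k\in\Delta\,\big\rangle,
$$
where $\Delta\subseteq\mathbf{I}_m$ denotes the $G$-orbit of $j$. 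The vectors $\{f_{ik}\mid k\in\Delta\}$ form an orthonormal basis of $W_{ij}$, so $\dim W_{ij}=|\Delta|=[G:G_j]$, and since each $Q(\sigma)$ merely permutes this basis, $W_{ij}$ is a $G$-invariant subspace of $\times^{m}V$ (indeed the permutation module on $\Delta$).

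Next, since $C_{\lambda}$ is a linear combination of the operators $Q(\tau)$, the subspace $W_{ij}$ is $C_{\lambda}$-invariant, so $P:=C_{\lambda}|_{W_{ij}}$ is a well-defined operator on $W_{ij}$, and it is idempotent because $C_{\lambda}^{2}=C_{\lambda}$. Therefore $\dim V^{\lambda}_{ij}=\operatorname{rank}P=\operatorname{tr}P$. I would then evaluate $\operatorname{tr}P$ in the orthonormal basis $\{f_{ik}\}_{k\in\Delta}$: using that $C_{\lambda}$ is self-adjoint and idempotent one gets $\langle C_{\lambda}f_{ik},f_{ik}\rangle=\|C_{\lambda}f_{ik}\|^{2}=\|f^{\lambda}_{ik}\|^{2}$, so
$$
\dim V^{\lambda}_{ij}=\sum_{k\in\Delta}\|f^{\lambda}_{ik}\|^{2}.
$$
Finally I would invoke the norm formula \eqref{Norm} of Theorem~\ref{p1}, namely $\|f^{\lambda}_{ik}\|^{2}=\lambda(1)[\lambda,1_{G_k}]/[G:G_k]$; since all points in one orbit have conjugate stabilizers, $[G:G_k]=|\Delta|$ and $[\lambda,1_{G_k}]=[\lambda,1_{G_j}]$ for every $k\in\Delta$, and summing the $|\Delta|$ identical terms yields $\dim V^{\lambda}_{ij}=\lambda(1)[\lambda,1_{G_j}]$.

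I do not anticipate a serious obstacle here; the computation is short once the objects above are in place. The one point that genuinely needs attention is the conjugacy of the stabilizers $G_k$ as $k$ ranges over the orbit $\Delta$, because this is precisely what makes the summands in the trace constant and produces the stated closed form. (As a cross-check one could instead argue representation-theoretically: $W_{ij}\cong\operatorname{Ind}_{G_j}^{G}1_{G_j}$, and $C_{\lambda}$ restricts on $W_{ij}$ to the projection onto its $\overline{\lambda}$-isotypic part, whose dimension equals $\overline{\lambda}(1)\,[\operatorname{Ind}_{G_j}^{G}1_{G_j},\overline{\lambda}]=\lambda(1)[\lambda,1_{G_j}]$ by Frobenius reciprocity; but the trace argument is preferable since it uses only Theorem~\ref{p1}.)
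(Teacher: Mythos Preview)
Your proof is correct and follows the same overall strategy as the paper: restrict $C_{\lambda}$ to the $G$-invariant span $W_{ij}$ of the orbit vectors, observe that the restriction is idempotent, and compute $\dim V^{\lambda}_{ij}$ as its trace. The only difference is in how the trace is evaluated. The paper fixes coset representatives $\tau_1,\ldots,\tau_k$ of $G_j$ in $G$, writes out the matrix of $C_{\lambda}|_{W_{ij}}$ in the basis $\{f_{i\tau_p(j)}\}$ explicitly, and sums the diagonal entries $m_{pp}=\frac{\lambda(1)}{|G|}\sum_{\pi\in G_j}\lambda(\tau_p\pi\tau_p^{-1})$, simplifying via the class-function property of $\lambda$. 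You instead use that $C_{\lambda}$ is self-adjoint and idempotent to identify each diagonal entry with $\|f^{\lambda}_{ik}\|^{2}$, and then invoke the norm formula~\eqref{Norm} already supplied by Theorem~\ref{p1}; the conjugacy-of-stabilizers step then plays exactly the role that the class-function identity $\lambda(\tau_p\pi\tau_p^{-1})=\lambda(\pi)$ plays in the paper. Your route is slightly more economical since it reuses Theorem~\ref{p1} rather than redoing the coset calculation, and your parenthetical Frobenius-reciprocity cross-check is a nice conceptual gloss that the paper does not mention.
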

 \begin{proof}
Let
 $[G:G_j]=k$ , $G=\bigcup_{i=1}^{k} \tau_{i}G_{j}$
be the left coset decomposition of $G_{j}$ in $G$ and  
$$
Orb~(j) =\lbrace \tau_{1}(j), \ldots, \tau_{k}(j) \rbrace.
$$
We put
\begin{align*}
V_{ij}=&\langle  f_{i \sigma_(j)}\mid \sigma \in G \rangle.
\end{align*}
Then
\begin{align*}
\mathbb{E}_{ij}=\lbrace  f_{i \tau_1(j)}, \ldots,f_{i \tau_k(j)}\rbrace
\end{align*}
is a basis for $V_{ij}$ but the set 
\begin{align*}
\lbrace  f_{i \tau_{1}(j)}^\lambda ,\ldots, f_{i \tau_{k}(j)}^\lambda\rbrace,
\end{align*}
may not be a basis for $V_{ij}^{\lambda}$.
Notice that $V_{ij}^{\lambda}=C_{\lambda}(V_{ij})$. Since $V_{ij}$ is invariant under $C_{ \lambda}$, so
we put 
$$
C_{ij}(G, \lambda)=C_{ \lambda} \mid  _{V_{ij}}.
$$
This  restriction is a linear operator on $V_{ij}$.
Since $C_{ \lambda}$ is an orthogonal projection on $\times^{m}V$, so $ C_{ij}(G, \lambda)$ is also an orthogonal projection on $V_{ij}$. 

Let $M=(m_{ij})=[ C_{ij}(G, \lambda)]_{\mathbb{E}_{ij}}$. Then 
$$
\dim V_{ij}^{\lambda}=rank~ C_{ij}(G, \lambda)=tr~ C_{ij}(G, \lambda)=tr~M.
$$
Now we compute the entries of the matrix $M$. We have
 \begin{align*}
C_{ij}(G, \lambda) f_{i \tau_{q}(j)} =&C_{\lambda}( f_{i\tau_{q}(j)}) \\
=&\frac{\lambda(1)}{|G|}\sum_{\sigma \in G} {\lambda ( \sigma) Q(\sigma)(f_{i \tau_{q}(j)})}\\
=&\frac{\lambda(1)}{|G|}\sum_{\sigma \in G} {\lambda ( \sigma) f_{i\sigma \tau_{q}(j)}}\\
=&\frac{\lambda(1)}{|G|}\sum_{\pi \in G} {\lambda ( \pi\tau_q^{-1}) f_{i\pi(j)}}   \hspace{10mm}(\sigma\tau_q=\pi)\\
=&\frac{\lambda(1)}{|G|}\sum_{p=1}^k\sum_{\pi\in {G_j}}\lambda(\tau_{p}\pi \tau^{-1}_{q})f_{i\tau_{p}\pi (j)}    \hspace{10mm}\\
=&\frac{\lambda(1)}{|G|}\sum_{p=1}^k\sum_{\pi \in{G_j}}\lambda(\tau_{p}\pi \tau^{-1}_{q})f_{i\tau_{p}(j)} \\
=&\sum_{p=1}^k\left(\frac{\lambda(1)}{|G|}\sum_{\pi \in{G_j}}\lambda(\tau_{p}\pi \tau^{-1}_{q}) \right)f_{i\tau_{p}(j)}.
\end{align*}
Thus
$$
m_{pq}=\frac{\lambda(1)}{|G|}\sum_{\pi \in {G_j}}\lambda (\tau_{p}\pi \tau _{q}^{-1}).
$$
Hence
 \begin{align*}
 \dim V^{\lambda}_{ij}=tr M=\sum_{p=1}^{k} m_{pp}
=\sum_{p=1}^{k}\frac{\lambda(1)}{|G|}\sum_{\pi \in{G_j}}\lambda (\tau_{p}\pi \tau^{-1}_{p})
=\sum_{p=1}^k\frac{\lambda(1)}{|G|}\sum_{\pi \in{G_j}}\lambda (\pi )\\
=\frac{k}{|G|}\lambda(1) \sum_{\pi \in{G_j}}\lambda (\pi )
=\lambda (1) \frac{1}{|G_j|} \sum_{\pi \in{G_j}}\lambda (\pi )
=\lambda(1)[\lambda, 1_{G_j}],
\end{align*}
so the result holds.
 \end{proof}
By using Theorem \ref{cyclic subspace} and Equation (\ref{ODS}), we have the following corollary.
 \begin{corollary}\label{dimension}
 Let $G$ be a subgroup of $S_{m}$. Then for any $\lambda \in \text{Irr}(G)$, we have 
 $$
 \dim V^{\lambda}(G) = (\dim V) \lambda (1)\sum_{j\in \bar{\mathcal{D}}}[\lambda, 1_{G_{j}}].
 $$
 \end{corollary}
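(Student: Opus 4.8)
The plan is to read the result off directly from the orthogonal direct sum decomposition in Equation (\ref{ODS}) together with the dimension formula for cyclic subspaces just proved in Theorem \ref{cyclic subspace}. Since (\ref{ODS}) exhibits $V^{\lambda}(G)$ as an internal orthogonal direct sum of the subspaces $V^{\lambda}_{ij}$ over $1\leq i\leq n$ and $j\in\bar{\mathcal{D}}$, dimensions are additive, so
$$
\dim V^{\lambda}(G)=\sum_{i=1}^{n}\sum_{j\in\bar{\mathcal{D}}}\dim V^{\lambda}_{ij}.
$$

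Next I would substitute $\dim V^{\lambda}_{ij}=\lambda(1)[\lambda,1_{G_{j}}]$ from Theorem \ref{cyclic subspace}. The key observation is that this quantity is independent of the index $i$ — it involves only the stabilizer $G_{j}$ and the character $\lambda$ — so the inner sum over $i=1,\dots,n$ merely contributes a factor $n=\dim V$. Pulling the constant $\lambda(1)$ out of the remaining sum over $j$ then gives
$$
\dim V^{\lambda}(G)=(\dim V)\,\lambda(1)\sum_{j\in\bar{\mathcal{D}}}[\lambda,1_{G_{j}}],
$$
which is the claimed formula.

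There is essentially no obstacle here, since everything needed has already been established in the excerpt; the argument is pure bookkeeping. The only point meriting a second's care is to confirm that the sum in (\ref{ODS}) is genuinely a direct sum (so that dimensions add rather than merely being subadditive), but this was noted when the decomposition was introduced: the cyclic subspaces are pairwise orthogonal, and each generator $f^{\lambda}_{ij}$ with $j\in\bar{\mathcal{D}}$ is nonzero by Equation (\ref{Norm}). One may also remark that, since $[\lambda,1_{G_{j}}]=0$ for $j\in\mathcal{D}\setminus\bar{\mathcal{D}}$, the sum could equivalently be taken over a full set $\mathcal{D}$ of representatives of the orbits of $G$ on $\mathbf{I}_m$.
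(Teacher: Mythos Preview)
Your proposal is correct and follows exactly the approach indicated in the paper, which simply cites the orthogonal decomposition (\ref{ODS}) together with Theorem~\ref{cyclic subspace}. Your write-up merely makes explicit the bookkeeping that the paper leaves to the reader.
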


 Similar to the proof of  Theorem 1 in \cite{Shahryari}, we can prove the following important theorem.
\begin{theorem}\label{o-basis}
Let $\lambda $ be a non-linear irreducible character of $G$ and suppose there exists $j\in \bar{\mathcal{D}}$ such that
$$
\frac{\sqrt{2}}{2} < \parallel f_{ij}^{\lambda}\parallel < 1.
$$
Then $V^{\lambda}(G)$ has no $O$-basis.
\end{theorem}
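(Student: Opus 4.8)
The plan is a proof by contradiction that localises the question to a single cyclic subspace. Assume $V^{\lambda}(G)$ has an $O$-basis, i.e.\ a basis $\mathcal{B}$ consisting of pairwise orthogonal standard $\lambda$-symmetrized vectors. By Theorem~\ref{p1}, $\langle f^{\lambda}_{rk},f^{\lambda}_{il}\rangle=0$ whenever $r\neq i$ or $k\nsim l$, so each nonzero standard vector $f^{\lambda}_{rk}$ lies in exactly one of the cyclic subspaces of the orthogonal direct sum (\ref{ODS}): namely in $V^{\lambda}_{r,k'}$, where $k'\in\bar{\mathcal{D}}$ represents the orbit of $k$. Since the $V^{\lambda}_{ij}$ are mutually orthogonal while $\mathcal{B}$ is orthogonal and spans $V^{\lambda}(G)$, the set $\mathcal{B}$ is the disjoint union of the sets $\mathcal{B}\cap V^{\lambda}_{ij}$, and a dimension count in the direct sum shows each $\mathcal{B}\cap V^{\lambda}_{ij}$ is an $O$-basis of $V^{\lambda}_{ij}$. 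Taking the index $i$ and the point $j\in\bar{\mathcal{D}}$ of the hypothesis, and noting that a standard vector lying inside $V^{\lambda}_{ij}$ must be some $f^{\lambda}_{ik}$ with $k\in\mathrm{Orb}(j)$, we obtain a subset $S\subseteq\mathrm{Orb}(j)$ such that $\{f^{\lambda}_{ik}\mid k\in S\}$ is an orthogonal basis of $V^{\lambda}_{ij}$.

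I would next collect three elementary facts. (i) For $k\in\mathrm{Orb}(j)$ the stabiliser $G_{k}$ is conjugate to $G_{j}$, so $[\lambda,1_{G_{k}}]=[\lambda,1_{G_{j}}]$ and $[G:G_{k}]=[G:G_{j}]$; hence by (\ref{Norm}) every $f^{\lambda}_{ik}$ with $k\in\mathrm{Orb}(j)$ has the same squared norm $c:=\|f^{\lambda}_{ij}\|^{2}$, and the hypothesis reads $\tfrac12<c<1$. (ii) With $N:=[G:G_{j}]=|\mathrm{Orb}(j)|$, Theorem~\ref{cyclic subspace} and (\ref{Norm}) give $\dim V^{\lambda}_{ij}=\lambda(1)[\lambda,1_{G_{j}}]=Nc$, so $|S|=Nc<N$; thus $S$ is a \emph{proper} subset of $\mathrm{Orb}(j)$ and we may fix $a\in\mathrm{Orb}(j)\setminus S$. (iii) For every $v\in V^{\lambda}_{ij}$ one has the Parseval-type identity $\sum_{k\in\mathrm{Orb}(j)}|\langle v,f^{\lambda}_{ik}\rangle|^{2}=\|v\|^{2}$: indeed $\langle v,f^{\lambda}_{ik}\rangle=\langle v,C_{\lambda}f_{ik}\rangle=\langle C_{\lambda}v,f_{ik}\rangle=\langle v,f_{ik}\rangle$ because $C_{\lambda}$ is a self-adjoint projection fixing $v$, while $\{f_{ik}\mid k\in\mathrm{Orb}(j)\}$ is an orthonormal basis of $V_{ij}\supseteq V^{\lambda}_{ij}$.

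The contradiction is then immediate. Applying (iii) with $v=f^{\lambda}_{ia}\in V^{\lambda}_{ij}$,
\[
c=\|f^{\lambda}_{ia}\|^{2}=\sum_{k\in\mathrm{Orb}(j)}\bigl|\langle f^{\lambda}_{ia},f^{\lambda}_{ik}\rangle\bigr|^{2}.
\]
On the other hand, expanding $f^{\lambda}_{ia}$ in the orthogonal basis $\{f^{\lambda}_{ik}\mid k\in S\}$ of $V^{\lambda}_{ij}$, all of whose members have squared norm $c$, gives $\sum_{k\in S}|\langle f^{\lambda}_{ia},f^{\lambda}_{ik}\rangle|^{2}=c^{2}$. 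Subtracting, $\sum_{k\in\mathrm{Orb}(j)\setminus S}|\langle f^{\lambda}_{ia},f^{\lambda}_{ik}\rangle|^{2}=c-c^{2}$. Since $a\in\mathrm{Orb}(j)\setminus S$, the non-negative term with $k=a$ occurs on the left and equals $\|f^{\lambda}_{ia}\|^{4}=c^{2}$; hence $c^{2}\le c-c^{2}$, i.e.\ $c\le\tfrac12$, contradicting $c>\tfrac12$. Therefore $V^{\lambda}(G)$ has no $O$-basis.

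The step I expect to require the most care is the localisation in the first paragraph — that an $O$-basis of $V^{\lambda}(G)$ must intersect each cyclic subspace $V^{\lambda}_{ij}$ in an $O$-basis of it — because this is exactly what makes the distinguished $j$ of the hypothesis usable; it rests squarely on the orthogonality relations of Theorem~\ref{p1} and the orthogonal decomposition (\ref{ODS}). Everything after that is routine linear algebra. (It is worth remarking that the hypothesis tacitly forces $\lambda$ to be non-linear: for a linear character, (\ref{Norm}) gives $\|f^{\lambda}_{ij}\|^{2}=1/[G:G_{j}]$ for $j\in\bar{\mathcal{D}}$, which can never lie strictly between $\tfrac12$ and $1$.)
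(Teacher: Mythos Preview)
Your proof is correct and is precisely the adaptation of Shahryari's argument that the paper has in mind (the paper does not spell out its own proof, merely citing \cite{Shahryari}). The localisation to a single cyclic subspace via Theorem~\ref{p1} and (\ref{ODS}), the equality $\dim V^{\lambda}_{ij}=Nc$ from Theorem~\ref{cyclic subspace} and (\ref{Norm}), and the Parseval identity in (iii) --- which is just the statement that the Gram matrix $H=(\langle f^{\lambda}_{ik},f^{\lambda}_{il}\rangle)$ coincides with the matrix of the orthogonal projection $C_{\lambda}|_{V_{ij}}$ in the orthonormal basis $\{f_{ik}\}$, hence satisfies $H^{2}=H$ --- are exactly the ingredients of Shahryari's proof transported to the Cartesian setting.
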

We will show that $\frac{\sqrt{2}}{2}$ is the best lower bound for $\parallel f_{ij}^{\lambda}\parallel$ in Theorem \ref{o-basis}. Also, we will give a counter-example that the converse of this theorem is not true.

 Recall that $G$ is $2$-transitive if $m\geq 2$ and $G_{i}$ is transitive on $\mathbf{I}_m\setminus \{i\}$ for some $1\leq i\leq m$.
 Let $\theta (\sigma)=\mid\{i~|~1\leq i\leq m, \sigma (i)=i\}\mid$. By \cite[Corollary 5.17]{Isaacs}, $G$ is $2$-transitive 
 if and only if $\lambda =\theta-1\in \text{Irr}(G)$. Now we have the following corollary.
 \begin{corollary}
Let  $G \leq S_{m}~(m\geq 3)$ be $2$-transitive and  $\lambda =\theta-1$. Then $V^{\lambda}(G)$ has no  $O$-basis.
In particular, if $G = S_{m}\ (m\geq3)$ or $G = A_{m}\ (m\geq4)$, then $V^{\lambda}(G)$ does not have an $O$-basis.
\end{corollary}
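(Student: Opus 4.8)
The plan is to apply Theorem~\ref{o-basis} directly: it suffices to exhibit an index $j\in\bar{\mathcal{D}}$ with $\frac{\sqrt{2}}{2}<\parallel f_{ij}^{\lambda}\parallel<1$, and everything will come out of the norm formula \eqref{Norm} once $[\lambda,1_{G_j}]$ and $[G:G_j]$ are computed. First I would observe that $2$-transitivity forces $G$ to be transitive on $\mathbf{I}_m$, so the action has a single orbit; fix a representative $j_0$, so that $\mathcal{D}=\{j_0\}$ and $[G:G_{j_0}]=m$. By Frobenius reciprocity $[\lambda,1_{G_{j_0}}]=[\lambda,(1_{G_{j_0}})^G]=[\lambda,\theta]$, since the permutation character of $G$ on the cosets of $G_{j_0}$ is exactly the fixed-point character $\theta$. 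Because $G$ is $2$-transitive, $\theta=1_G+\lambda$ with $\lambda=\theta-1\in\mathrm{Irr}(G)$ (the cited \cite[Corollary 5.17]{Isaacs}), whence $[\lambda,\theta]=1\neq 0$; in particular $j_0\in\mathcal{O}$, so $\bar{\mathcal{D}}=\{j_0\}$ and $j_0$ is a legitimate choice.

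Next I would substitute into \eqref{Norm}: since $\lambda(1)=\theta(1)-1=m-1$, we get $\parallel f_{ij_0}^{\lambda}\parallel^2=\frac{(m-1)\cdot 1}{m}=1-\frac{1}{m}$ for every $1\leq i\leq n$. For $m\geq 3$ this quantity lies strictly between $\frac{1}{2}$ and $1$, hence $\frac{\sqrt{2}}{2}<\parallel f_{ij_0}^{\lambda}\parallel<1$; moreover $\lambda(1)=m-1\geq 2$, so $\lambda$ is non-linear, as required by the hypotheses of Theorem~\ref{o-basis}. That theorem then gives at once that $V^{\lambda}(G)$ has no $O$-basis. For the ``in particular'' part, I would just recall that $S_m$ is $2$-transitive for all $m\geq 2$ and $A_m$ is $2$-transitive for all $m\geq 4$ (indeed $(m-2)$-transitive), so in each case $\theta-1$ is the irreducible character in question and the general statement applies.

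The argument is essentially a short verification, so there is no genuine obstacle; the only point requiring a little care is the identification $\theta=(1_{G_{j_0}})^G$ together with the equivalence ``$G$ is $2$-transitive $\iff$ $\theta-1$ is irreducible'' (equivalently $[\theta,\theta]=2$), which is precisely the input from \cite[Corollary 5.17]{Isaacs} recalled immediately before the statement. Once this is in place, the norm computation $\parallel f_{ij_0}^{\lambda}\parallel^2=1-\frac1m$ and the resulting inequality are immediate.
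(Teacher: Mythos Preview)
Your proof is correct and follows essentially the same route as the paper: compute $\parallel f_{ij_0}^{\lambda}\parallel^2=(m-1)/m$ via \eqref{Norm} and invoke Theorem~\ref{o-basis}. The paper chooses $j_0=m$, writes $G_m=G\cap S_{m-1}$, and cites Burnside's Lemma for $[\lambda,1_{G_m}]=1$, whereas you obtain the same value via Frobenius reciprocity and the decomposition $\theta=1_G+\lambda$; your version is in fact slightly more complete, since you explicitly verify that $\lambda$ is non-linear and that $j_0\in\bar{\mathcal{D}}$, both of which are needed to apply Theorem~\ref{o-basis} but are left implicit in the paper.
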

\begin{proof}
Clearly $G_{m}=G \cap S_{m-1}$ so $[ \lambda , 1_{G_{m}}]=1$ (Burnside's Lemma) 
and $[G : G_{m}]=[S_{m} : S_{m-1}]=m$ . By Equation (\ref{Norm}), we have 
$$
 \parallel f_{im}^{\lambda}\parallel^2=\frac{m-1}{m},
$$
so the result holds by the previous theorem.
\end{proof}
We will say that $\times^{m}V$ has $O$-basis if for each irreducible character $\lambda$ of $G$, the Cartesian symmetry class $V^{\lambda}(G)$ has an $O$-basis.
We know that $\times^{m}V=\sum_{\lambda \in \text{Irr}(G)}^{\bot}{V^{\lambda}(G)}$ (orthogonal direct sum), so the following result follows immediately from the above corollary.
\begin{corollary}
Assume $m \geq 3$. If $G\leq S_m$ is 2-transitive, then $\times ^{m}V$ does not have an $O$-basis. 
\end{corollary}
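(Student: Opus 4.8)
The plan is to obtain this statement as an immediate consequence of the preceding corollary together with the definition of ``$\times^{m}V$ has an $O$-basis'' given just above it. Recall that, by that definition, $\times^{m}V$ has an $O$-basis if and only if $V^{\lambda}(G)$ has an $O$-basis for \emph{every} $\lambda \in \text{Irr}(G)$; equivalently, to prove that $\times^{m}V$ has \emph{no} $O$-basis it suffices to exhibit a single irreducible character $\lambda$ of $G$ for which $V^{\lambda}(G)$ has no $O$-basis.

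First I would invoke the $2$-transitivity hypothesis: since $G \leq S_{m}$ is $2$-transitive and $m \geq 3$, the criterion quoted before the previous corollary (\cite[Corollary 5.17]{Isaacs}) shows that $\lambda := \theta - 1$ lies in $\text{Irr}(G)$, and moreover $\lambda(1) = m - 1 \geq 2$, so $\lambda$ is non-linear. Then I would apply the previous corollary verbatim to this $\lambda$ to conclude that $V^{\lambda}(G)$ has no $O$-basis. Finally, combining this with the orthogonal decomposition $\times^{m}V = \sum_{\mu \in \text{Irr}(G)}^{\bot} V^{\mu}(G)$ and the definition recalled above yields that $\times^{m}V$ has no $O$-basis, which is the claim.

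There is no real obstacle here: the substantive work has already been carried out in the previous corollary (which in turn rests on Theorem \ref{o-basis} and the norm formula \eqref{Norm}), and the only point needing a moment's care is that the hypothesis $m \geq 3$ is being used in two places simultaneously — it is what guarantees that $\theta - 1$ is a non-linear irreducible character of the $2$-transitive group $G$, and it is precisely the hypothesis under which the previous corollary applies. I would also note that the bound $m \geq 3$ cannot be weakened to $m \geq 2$: for $m = 2$ one has $G = S_{2}$, the character $\theta - 1 = \varepsilon$ is linear, and every Cartesian symmetry class of $S_{2}$ does admit an $O$-basis, so the conclusion genuinely requires $m \geq 3$.
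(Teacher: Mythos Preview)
Your proposal is correct and follows exactly the paper's own argument: the corollary is stated to follow immediately from the previous corollary together with the orthogonal decomposition $\times^{m}V=\sum_{\lambda \in \text{Irr}(G)}^{\bot}V^{\lambda}(G)$ and the definition of ``$\times^{m}V$ has an $O$-basis''. Your additional remarks about where the hypothesis $m\geq 3$ is used and why it cannot be relaxed are accurate but go beyond what the paper records.
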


\section{\bf Cyclic Cartesian symmetry classes}
The well-known Ramanujan sum is given by
$$
\mathit{c}_{m}(q) = \sum_{s=0,\\ (s, m)=1}^{m-1}\exp(\frac{2 \pi iqs}{m}),
$$
where $m$ is a positive integer, and $q$ is a non-negative integer (see \cite{Apostol}).
  \begin{theorem} 
Let $G$ be the cyclic subgroup of $S_m$,  generated by the $m$-cycle $(1~2~\cdots ~m)$. Then for any $\lambda\in \text{Irr}(G)$, we have
$$
\dim V^\lambda (G)=\dim V.
$$ 
\end{theorem}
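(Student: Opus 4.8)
The plan is to compute $\dim V^{\lambda}(G)$ directly from Corollary~\ref{dimension}, which gives
$$
\dim V^{\lambda}(G) = (\dim V)\,\lambda(1)\sum_{j\in\bar{\mathcal D}}[\lambda,1_{G_j}].
$$
Since $G=\langle(1\,2\,\cdots\,m)\rangle$ is cyclic, every irreducible character $\lambda$ is linear, so $\lambda(1)=1$; this already reduces the formula to $(\dim V)\,|\bar{\mathcal D}|$, in agreement with the earlier remark that linear characters always yield an $O$-basis. So the whole problem collapses to showing $|\bar{\mathcal D}|=1$.

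First I would observe that the $m$-cycle acts transitively on $\mathbf I_m=\{1,\dots,m\}$, so there is exactly one orbit and $\mathcal D$ is a singleton, say $\mathcal D=\{1\}$. Next I must check that $1\in\mathcal O$, i.e. $[\lambda,1_{G_1}]\neq 0$. Here the key point is that the stabilizer of any point under a transitive cyclic group generated by a single $m$-cycle is trivial: $G_1=\{e\}$. Therefore $1_{G_1}$ is the trivial character of the trivial group, and $[\lambda,1_{G_1}]=\lambda(e)=1\neq 0$ for every $\lambda\in\mathrm{Irr}(G)$. Hence $\mathcal O=\mathcal D=\{1\}$ and $\bar{\mathcal D}=\mathcal D\cap\mathcal O=\{1\}$, so $|\bar{\mathcal D}|=1$.

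Putting these together, $\dim V^{\lambda}(G)=(\dim V)\cdot 1\cdot[\lambda,1_{\{e\}}]=(\dim V)\cdot 1 = \dim V$, which is the claim. As a consistency check one can also note that $\|f^{\lambda}_{i1}\|^2=\lambda(1)[\lambda,1_{G_1}]/[G:G_1]=1/m$, so the single cyclic subspace $V^{\lambda}_{i1}$ is one-dimensional (as it must be for a linear character), and summing over the $n=\dim V$ choices of $i$ recovers the answer.

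There is essentially no obstacle here: the only thing to be careful about is correctly identifying the point stabilizer $G_j$ as trivial (this uses crucially that $G$ is generated by a \emph{single} $m$-cycle on all of $\mathbf I_m$, not merely that $G$ is cyclic), and confirming that consequently every irreducible character of $G$ contributes. The result is really a warm-up/sanity check for the genuinely substantive computations in the rest of Section~3, where the cycle structure is more complicated and the Ramanujan sum enters.
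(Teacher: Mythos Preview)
Your argument is correct and matches the paper's proof essentially line for line: linearity of the characters of a cyclic group, transitivity of the action giving $|\mathcal D|=1$, triviality of the stabilizer $G_1$, and then an appeal to Corollary~\ref{dimension}. One cosmetic slip: $\mathcal O$ is actually all of $\mathbf I_m$ (every point stabilizer is trivial), not just $\{1\}$, but since only the intersection $\bar{\mathcal D}=\mathcal D\cap\mathcal O$ enters the formula this does not affect the conclusion.
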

\begin{proof}
By \cite[Corollary 2.6]{Isaacs}, a finite group $G$ is abelian if and only if every irreducible character of it is linear. Let $\lambda\in \text{Irr}(G)$. Then $\lambda (1)=1$. It is easy to see that the action of $G$ on $\mathbf{I}_m$ is transitive, so $|\mathcal{D}|=1$. Assume that $\mathcal{D}=\{1\}$. Clearly $G_1=\{1\}$ and $[\lambda, 1_{G_1}]=1$. Therefore $\dim V^\lambda (G)=\dim V$, by 
Corollary \ref{dimension}. 
\end{proof}
Now, let's consider the cyclic group $G = \langle \sigma_{1} \sigma_{2} \cdots \sigma_{p} \rangle$, where $\sigma_i \ (1 \leq i \leq p)$ are disjoint cycles in $S_{m}$. Assume that the order of $\sigma_{i} \ (1 \leq i \leq p)$ is $m_i$. The irreducible characters of $G$ are given by 
$$
\lambda_{q}\left((\sigma_{1}\cdots\sigma_{p})^s\right) = \exp\left(\frac{2 \pi i qs}{M}\right), \quad s=0, \ldots , M - 1,
$$
where $M$ denotes the least common multiple of the integers $m_1, \ldots, m_p$.\\

In the following theorem we obtain $ \dim V^{\lambda _q}(G)$ in terms of  Ramanujan sum.
  \begin{theorem}
 Let $G =\langle \sigma_{1} \sigma_{2}\cdots  \sigma_{p}\rangle$, where $\sigma _{i}\ (1 \leq i\leq p)$ are disjoint cycles in $S_{m}$ of
 orders $m_{1}, \cdots , m_{p}$, respectively. Assume $m=\sum_{i=1}^{p} m_{i}$ and let $\lambda_{q},~ 0 \leq q \leq M-1$, be an irreducible complex character of $G$.
Then
$$
  \dim V^{\lambda _q}(G)=  (\dim V)\sum_{i=1}^{p}\frac{1}{m_{i}^{\prime}}
 \sum_{d\mid m_{i}^{\prime}} \mathit{c}_{m_{i}^{\prime}/d}(q),
$$
where $m_{i}^{\prime}=M/m_{i}$.
In particular, if $G$ is generated by an $m$-cycle in $S_{m}$, then $\dim V^{\lambda_{q}}(G)=\dim V$.
\end{theorem}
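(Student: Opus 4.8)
The plan is to reduce everything to Corollary~\ref{dimension}. Since $G=\langle\sigma_1\sigma_2\cdots\sigma_p\rangle$ is cyclic it is abelian, so every irreducible character $\lambda_q$ is linear and $\lambda_q(1)=1$; hence Corollary~\ref{dimension} gives $\dim V^{\lambda_q}(G)=(\dim V)\sum_{j\in\bar{\mathcal D}}[\lambda_q,1_{G_j}]$. By the definition of $\mathcal O$ the summands with $j\notin\mathcal O$ vanish, so this sum may be taken over a full set $\mathcal D$ of representatives of the $G$-orbits on $\mathbf I_m$. Thus the proof comes down to two tasks: describing these orbits together with their stabilizers, and evaluating $[\lambda_q,1_{G_j}]$ on each orbit.

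First I would determine the orbit structure. Put $g=\sigma_1\sigma_2\cdots\sigma_p$. Because the $\sigma_i$ are disjoint and $\sum_{i=1}^{p}m_i=m$, the supports of $\sigma_1,\dots,\sigma_p$ partition $\mathbf I_m$ and $g$ has no fixed point; on the support of $\sigma_i$ the generator $g$ acts as the $m_i$-cycle $\sigma_i$, which is transitive there. Hence $G$ has exactly $p$ orbits, namely the supports of the $\sigma_i$, of sizes $m_1,\dots,m_p$, and we may take $\mathcal D=\{a_1,\dots,a_p\}$ with $a_i$ lying in the support of $\sigma_i$. For such an $a_i$ one has $g^{k}(a_i)=a_i$ if and only if $m_i\mid k$, so $G_{a_i}=\langle g^{m_i}\rangle$ is cyclic of order $M/m_i=m_i'$.

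Next I would compute the inner product on the $i$-th orbit. Restricting $\lambda_q$ to $G_{a_i}$ yields $\lambda_q(g^{m_i t})=\exp(2\pi i q t/m_i')$, so $[\lambda_q,1_{G_{a_i}}]=\frac{1}{m_i'}\sum_{t=0}^{m_i'-1}\exp(2\pi i q t/m_i')$, which equals $1$ if $m_i'\mid q$ and $0$ otherwise. It then remains to recognize this quantity as $\frac{1}{m_i'}\sum_{d\mid m_i'}\mathit{c}_{m_i'/d}(q)$. For this I would use the classical identity $\sum_{d\mid n}\mathit{c}_d(q)=n$ when $n\mid q$ and $0$ otherwise (see \cite{Apostol}), which follows by grouping the geometric sum $\sum_{a=1}^{n}\exp(2\pi i a q/n)$ according to the value of $\gcd(a,n)$, together with the reindexing $d\mapsto m_i'/d$ that turns $\sum_{d\mid m_i'}\mathit{c}_{m_i'/d}(q)$ into $\sum_{e\mid m_i'}\mathit{c}_e(q)$. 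Summing over $i$ then gives the displayed formula. Finally, for $G=\langle(1\,2\,\cdots\,m)\rangle$ we have $p=1$, $m_1=m=M$, $m_1'=1$ and $\mathit{c}_1(q)=1$ (recall $\gcd(0,1)=1$), so the formula collapses to $\dim V$, recovering the previous theorem.

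All the steps above are routine substitutions into results already established; the one place that calls for care is the last identification, matching the character-theoretic quantity $[\lambda_q,1_{G_{a_i}}]$ with the Ramanujan-sum expression via the correct bookkeeping of divisors. That is the main, though modest, obstacle.
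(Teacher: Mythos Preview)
Your proposal is correct and follows essentially the same route as the paper: both invoke Corollary~\ref{dimension}, identify the $p$ orbits as the supports of the $\sigma_i$ with stabilizers $\langle g^{m_i}\rangle$ of order $m_i'$, and reduce $[\lambda_q,1_{G_{a_i}}]$ to the geometric sum $\frac{1}{m_i'}\sum_{t=0}^{m_i'-1}\exp(2\pi i q t/m_i')$. The only cosmetic difference is in the last identification: the paper groups that geometric sum directly by $\gcd(t,m_i')$ to obtain $\sum_{d\mid m_i'}\mathit{c}_{m_i'/d}(q)$, whereas you first evaluate it to the indicator $[m_i'\mid q]$ and then invoke the classical identity $\sum_{d\mid n}\mathit{c}_d(q)=n\,[n\mid q]$; these are two phrasings of the same computation.
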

\begin{proof}
To compute the dimension $ \dim V^{\lambda_{q}}(G)$  we use Corollary \ref{dimension}. For every $1\leq i\leq p$, suppose 
$$
\mathcal{O}_i=\{ 1\leq j\leq m~\mid ~\sigma_i(j)\neq j\} .
$$
 Then 
 $\mathcal{O}_1,\cdots,\mathcal{O}_p$ are distinct orbits of $G$.
If $j\in \mathcal{O}_{i}$ then
 \begin{align*}
       G_j=& \{(\sigma_{1} \sigma_{2} \cdots \sigma_{p})^s ~|~ 0 \leq s \leq{M-1}, \sigma_{i}^{s}(j)=j \} \\
  =&\{\sigma_{1}^s \sigma_{2}^s \cdots \sigma_{p}^s ~|~0 \leq s \leq{M-1},  \sigma_{i}^s \in \langle \sigma_{i}\rangle _{j}=1 \}\\
  =& \{\sigma_{1}^s \cdots \sigma_{i-1}^s \sigma_{i+1}^s \cdots \sigma_{p}^{s} ~|~ 0 \leq s \leq {M-1}, m_{i}| s \}.  
\end{align*}
In this case, it is easy to see that $|G_j|=\frac{M}{m_i}$. 
Now for $j\in \mathcal{O}_{i}$ we have
 \begin{align*}
 [\lambda_q, 1_{G_j}]=&\frac{1}{|G_j|}\sum_{g\in G_j}\lambda _{q}(g)  \hspace{10mm} \\
 =&\frac{1}{|G_j|}\sum_{g\in G_j}\lambda_q ((\sigma_{1} \sigma_{2}\cdots  \sigma_{p})^s) \\
  =&\frac{m_i}{M}\sum_{s=0,\  m_{i}\mid s}^{M-1} \exp\left( \dfrac{2 \pi iqs}{M}\right) 
\end{align*}
For any $1\leq i\leq p$ we choose $j_i \in \mathcal{O}_{i}$. Then $\mathcal{D}=\{ j_1,\cdots,j_p \}$. Therefore
\begin{eqnarray}
 \dim V^{\lambda _q}(G) &=& (\dim V) \lambda _q (1)\sum_{j\in \bar{\mathcal{D}}}[\lambda_q, 1_{G_{j}}]\nonumber\\
 &=& (\dim V) \sum_{j\in \mathcal{D}}[\lambda_q, 1_{G_{j}}]\nonumber\\
 &=& (\dim V)\sum_{i=1}^{p} [\lambda_q, 1_{G_{j_i}}]\nonumber\\
 &=& (\dim V)\sum_{i=1}^{p}\frac{m_i}{M}
   \sum_{s=0, m_{i}\mid s}^{M-1} \exp\left( \dfrac{2 \pi iqs}{M}\right)\nonumber
\end{eqnarray}
Now letting $s_{i}^{\prime}=s/m_{i}$ and $m_{i}^{\prime}=M/m_{i}$, we obtain
\begin{eqnarray}
 \dim V^{\lambda _q}(G)
 &=& (\dim V)\sum_{i=1}^{p}\frac{1}{m_{i}^{\prime}}
   \sum_{s_{i}^{\prime}=0}^{m_{i}^{\prime}-1} \exp\left( \dfrac{2 \pi iqs_{i}^{\prime}}{m_{i}^{\prime}}\right)\nonumber\\
   &=& (\dim V)\sum_{i=1}^{p}\frac{1}{m_{i}^{\prime}}
 \sum_{d\mid m_{i}^{\prime}}~~~  \sum_{(s_{i}^{\prime},m_{i}^{\prime})=d} \exp\left( \dfrac{2 \pi iqs_{i}^{\prime}}{m_{i}^{\prime}}\right)\nonumber\\
 &=& (\dim V)\sum_{i=1}^{p}\frac{1}{m_{i}^{\prime}}
 \sum_{d\mid m_{i}^{\prime}} \mathit{c}_{m_{i}^{\prime}/d}(q). \nonumber
\end{eqnarray}
\end{proof}
Our the other goal is to obtain the dimension of $V^{\lambda}(G)$,  when $G\leq S_m$ has the structure 
$$
G=<\sigma_{1}><\sigma_{2}> \cdots <\sigma_{k}>,
$$
with $\sigma_{\ell}$ $(1 \leq \ell \leq k)$ representing disjoint cycles in $S_{m}$ of specific orders, denoted as $m_{1},\ldots,m_{k}$, respectively. The irreducible characters of $G$ are exclusively linear and can be expressed as products of the irreducible characters of the cyclic groups $\langle\sigma_{\ell}\rangle$, where $\ell=1, \ldots, k$:
$$
\lambda_{(q_1,\ldots,q_k)} = \lambda_{q_1}\cdots \lambda_{q_k},~ q_{\ell}=0,\ldots,m_{\ell}-1,
$$
and the character $\lambda_{q_\ell}$ is defined as 
$\lambda_{q_\ell}(\sigma_{\ell}^{j_{\ell}})=\exp \left( \dfrac{2 \pi i{q_{\ell}}{j_{\ell}}}{m_{\ell}} \right)$.
 \begin{theorem}
Suppose $G=<\sigma_{1}><\sigma_{2}> \cdots <\sigma_{k}>, k\geq 2$, where $\sigma_{\ell}, \ell=1,\ldots,k$, are disjoint cycles in $S_{m}$ of orders $m_1,\cdots,m_{k}$. Let $\lambda = \lambda_{(q_1,\ldots ,q_k)}$, where $0\leq q_\ell\leq m_\ell-1$, $1\leq \ell\leq k$. Then
$$
 \dim V^{\lambda}(G)=(\dim V)  \left[(m-\sum_{\ell=1}^{k}m_{\ell})[\lambda , 1_{G}]+ \prod_{l \neq j}\frac{1}{m_\ell}\sum_{d\mid m_\ell} \mathit{c}_{m_{\ell}/d}(q_\ell)\right].
$$
In the case $k=1$, $\dim V^{\lambda}(G)=(\dim V)\left[(m-m_1)[\lambda , 1_{G}]+1\right]$.
\end{theorem}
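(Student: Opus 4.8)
The plan is to apply Corollary~\ref{dimension} once the orbits of $G$ on $\mathbf{I}_m$ and the corresponding point stabilizers are in hand. Since $\sigma_1,\ldots,\sigma_k$ are pairwise disjoint cycles they commute and act on disjoint sets, so $G=\langle\sigma_1\rangle\langle\sigma_2\rangle\cdots\langle\sigma_k\rangle$ is the internal direct product $\langle\sigma_1\rangle\times\cdots\times\langle\sigma_k\rangle$; in particular $G$ is abelian of order $m_1\cdots m_k$, the character $\lambda=\lambda_{(q_1,\ldots,q_k)}$ is linear (so $\lambda(1)=1$), and its restriction to any subproduct of the $\langle\sigma_t\rangle$ is the corresponding product of the $\lambda_{q_t}$.

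Next I would describe the orbit structure. Writing $\mathcal{O}_\ell$ for the support of $\sigma_\ell$ (the set of $m_\ell$ points it moves), the sets $\mathcal{O}_1,\ldots,\mathcal{O}_k$ are pairwise disjoint, each $\mathcal{O}_\ell$ is a single $G$-orbit (since $\langle\sigma_\ell\rangle$ is transitive on it and every $\sigma_t$ with $t\neq\ell$ fixes it pointwise), and the remaining $m-\sum_{\ell}m_\ell$ points are fixed by all of $G$ and hence form singleton orbits. So one may take $\mathcal{D}$ to consist of a point $j_\ell\in\mathcal{O}_\ell$ for each $\ell=1,\ldots,k$ together with all global fixed points. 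For the stabilizers: if $j$ is a global fixed point then $G_j=G$; if $j\in\mathcal{O}_\ell$, then $\sigma_1^{a_1}\cdots\sigma_k^{a_k}$ fixes $j$ exactly when $\sigma_\ell^{a_\ell}(j)=j$, i.e.\ when $m_\ell\mid a_\ell$, so $G_j=\prod_{t\neq\ell}\langle\sigma_t\rangle$.

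Now I would invoke Corollary~\ref{dimension}. Because $\lambda(1)=1$ and the summands with $[\lambda,1_{G_j}]=0$ contribute nothing, $\dim V^\lambda(G)=(\dim V)\sum_{j\in\mathcal{D}}[\lambda,1_{G_j}]$. The $m-\sum_\ell m_\ell$ global fixed points each give $[\lambda,1_G]$. For $j_\ell\in\mathcal{O}_\ell$, using $G_{j_\ell}=\prod_{t\neq\ell}\langle\sigma_t\rangle$ and the multiplicativity of $\lambda$ over this direct product, the inner product factors as
\[
[\lambda,1_{G_{j_\ell}}]=\prod_{t\neq\ell}\frac{1}{m_t}\sum_{a=0}^{m_t-1}\exp\!\left(\frac{2\pi i q_t a}{m_t}\right).
\]
Grouping each geometric sum by $d=\gcd(a,m_t)$ --- precisely the manipulation $\sum_{a=0}^{n-1}\exp(2\pi i qa/n)=\sum_{d\mid n}\mathit{c}_{n/d}(q)$ already used in the proof of the previous theorem --- turns this into $\prod_{t\neq\ell}\frac{1}{m_t}\sum_{d\mid m_t}\mathit{c}_{m_t/d}(q_t)$; summing these $k$ contributions over $\ell$ and adding the fixed-point term yields the stated formula, and when $k=1$ the product over $t\neq\ell$ is empty (equal to $1$), giving $\dim V^\lambda(G)=(\dim V)[(m-m_1)[\lambda,1_G]+1]$.

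I do not expect a genuine obstacle: after the orbit/stabilizer analysis the argument is bookkeeping, and the Ramanujan-sum rewriting is the same identity exploited in the preceding theorem. The only point requiring care is keeping the two indices straight --- the orbit index $\ell$ being ``omitted'' versus the running index $t\neq\ell$ in the product --- and correctly observing that the stabilizer of a point of $\mathcal{O}_\ell$ is the product of \emph{all the other} cyclic factors, which is exactly what makes the $\ell$-th contribution a product over $t\neq\ell$.
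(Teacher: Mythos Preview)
Your proposal is correct and follows essentially the same route as the paper's own proof: identify the $G$-orbits as the supports $\mathcal{O}_\ell$ together with the global fixed points, compute the stabilizer of a point in $\mathcal{O}_\ell$ as $\prod_{t\neq\ell}\langle\sigma_t\rangle$, apply Corollary~\ref{dimension}, factor the inner product over the direct-product stabilizer, and rewrite each geometric sum via the Ramanujan-sum identity. Your handling of the indices is in fact cleaner than the paper's --- you keep the omitted-orbit index $\ell$ separate from the running product index $t$, and you correctly record the outer sum $\sum_{\ell=1}^k$ over orbit representatives that the paper's final displayed formula suppresses.
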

\begin{proof}
For any $j \in \mathcal{D}$, if $\sigma_{i}(j)=j$ for all $1\leq i\leq k$, then $G_{j}=G$ and so $[\lambda , 1_{G_
j}]=[\lambda , 1_{G}]$.
Otherwise there exists $1\leq i
\leq k$, such that $\sigma_{i}(j) \neq j $. Then $G_{j} = \prod_{i \neq j}<\sigma_i>$. Thus we have
 \begin{align*}
 [\lambda, 1_{G_j}]=&\frac{1}{|G_j|}\sum_{g\in G_j}\lambda (g)  \hspace{10mm}  \\
 =&\prod_{\ell \neq j}\frac{1}{m_\ell}\sum_{j_\ell=0}^{m_\ell-1}\lambda_{q_\ell}(\sigma_{\ell}^{j_{\ell}})  \\
  =&\prod_{\ell \neq j}\frac{1}{m_\ell}\sum_{j_\ell=0}^{m_\ell-1} \exp \left( \dfrac{2 \pi i{q_{\ell}}{j_{\ell}}}{m_{\ell}} \right)\\
=&\prod_{\ell \neq j}\frac{1}{m_\ell}\sum_{d\mid m_\ell}\sum_{d=(j_\ell, m_\ell)} \exp \left( \dfrac{2 \pi i{q_{\ell}}{j_{\ell}}}{m_{\ell}} \right)\\
=&\prod_{\ell \neq j}\frac{1}{m_\ell}\sum_{d\mid m_\ell} \mathit{c}_{m_\ell/d}(q_\ell).
  \end{align*}
Now, by letting $\mathcal{A} = \{ j ~|~  \sigma_{i}(j) = j~\text{for all}~1\leq i\leq k\}$, we have

\begin{eqnarray}
 \dim V^{\lambda}(G) &=&(\dim V) \lambda (1)\sum_{j\in {\mathcal{D}}}[\lambda, 1_{G_j}] \nonumber\\
  &=&(\dim V) \left[\sum_{j\in \mathcal{A}}[\lambda , 1_{G}]+ \sum_{j=1}^{k}[ \lambda, 1_{G_j}]\right],\nonumber\\
 &=&(\dim V)  \left[(m-\sum_{\ell=1}^{k}m_{\ell})[\lambda , 1_{G}]+ \prod_{\ell \neq j}\frac{1}{m_\ell}\sum_{d\mid m_\ell} \mathit{c}_{m_\ell/d}(q_\ell)\right],\nonumber
\end{eqnarray}
so the result holds.
\end{proof}
\section{\bf Cartesian symmetry classes associated with dihedral group}
In this section, we first obtain the dimensions of Cartesian symmetry classes associated with
the irreducible characters of the dihedral group $D_{2m}$. \\

The subgroup $D_{2m}$ of $S_{m}~(m\geq 3)$ generated by the elements
$$\ r=(1\ 2\ \cdots\ m)\ \ \text {and}\ \  s=\begin{pmatrix}
  _{1} & _{2} & _{3} & _{\cdots} & _{m-1} & _{m} \\
  _{1} & _{m} & _{m-1} & _{\cdots} & _{3} & _{2}
\end{pmatrix}
$$
is the {\em dihedral group of degree $m$} (see \cite[page 50]{Hungerford}). The generators $r$
and $s$ satisfy
$$
r^{m}=1=s^{2}\ \text{and}\ s^{-1}r s=r^{-1}.
$$
 In particular,
$$
D_{2m}=\{r^{k},\ sr^{k}|\ 0\leq k<m\}.
$$
If $m$ is even, i.e., $m = 2k$~($k \geq 2$), then $D_{2m}$ has $k + 3$ conjugacy classes.
If $m$ is odd, i.e., $m = 2k + 1$~($k \geq 1$), then $D_{2m}$ has $k + 2$ conjugacy classes.\\

For each integer $h$ with $0<h<m/2$, $D_{2m}$ has an irreducible
character $\psi_{h}$ of degree $2$ given by
$$\psi_{h}(r^{k})=2\cos \frac{2kh\pi}{m},\ \
\psi_{h}(sr^{k} )=0,\ \ \  0 \leq k < m.
$$
The other characters of $D_{2m}$ are of degree $1$, namely
$\lambda_{j}$. The character table of $D_{2m}$ is shown in Table 1 (see \cite[page 182]{James}).

Clearly, the action of $G=D_{2m}$ on $\mathbf{I}_m=\{ 1,2,\cdots, m \}$ is transitive, so $\mathcal{D}=\{1\}$. On the other hand, $G_1=\{1, s\}$. Thus for any $\lambda\in \text{Irr}(G)$, we have 
\begin{eqnarray}\label{dimD_2m}
\dim V^{\lambda} (G)=(\dim V)~\lambda (1)~[\lambda, 1_{G_1}]=(\dim V)~\frac{\lambda (1)}{2}~[\lambda (1)+\lambda (s)].
\end{eqnarray}
Now, by using Table 1 and Equation (\ref{dimD_2m}), we have the following result.
\begin{theorem} 
 Let $G=D_{2m}~(m \geq 3)$.  Assume $n = \dim V \geq 2$. \\
 
{\bf Case (i)} If $m$ is even, then 

\begin{itemize}

\item[(a)]
$\dim V^{\lambda_{1}} (G)=\dim V^{\lambda_{3}} (G)=n$, \\

\item[(b)]
$\dim V^{\lambda_{2}} (G)=\dim V^{\lambda_{4}} (G)=0$, \\

\item[(c)]
$\dim V^{\psi_{h}} (G)=2n~(0< h< \frac{m}{2})$.

 \end{itemize}
 {\bf Case (ii)} If $m$ is odd, then 
\begin{itemize}

\item[(a)]
$\dim V^{\lambda_{1}} (G)=n$, \\

\item[(b)]
$\dim V^{\lambda_{2}} (G)=0$, \\

\item[(c)]
$\dim V^{\psi_{h}} (G)=2n~(0< h< \frac{m}{2})$.
 \end{itemize}
\end{theorem}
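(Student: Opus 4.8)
The plan is simply to specialize the master dimension formula to $G=D_{2m}$ by reading the needed character values off Table 1; no new structural work is required beyond what the preceding sections already supply. Indeed, since $D_{2m}$ acts transitively on $\mathbf{I}_m$ with point stabilizer $G_1=\{1,s\}$, Corollary \ref{dimension} collapses, as recorded in $(\ref{dimD_2m})$, to
\[
\dim V^{\lambda}(G)=(\dim V)\,\frac{\lambda(1)}{2}\bigl(\lambda(1)+\lambda(s)\bigr),
\]
so for each $\lambda\in\text{Irr}(D_{2m})$ it remains only to evaluate $\lambda(1)$ and the value of $\lambda$ at the reflection $s=sr^{0}$.

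First I would handle the linear characters. For the principal character $\lambda_1(1)=\lambda_1(s)=1$, so $\dim V^{\lambda_1}(G)=\dim V=n$. For the linear character $\lambda_2$ with $r\mapsto 1$, $s\mapsto-1$ one has $\lambda_2(1)+\lambda_2(s)=0$, hence $\dim V^{\lambda_2}(G)=0$. When $m$ is even there are two further linear characters $\lambda_3,\lambda_4$, both sending $r\mapsto-1$; evaluating at $s=sr^{0}$ gives $\lambda_3(s)=1$ and $\lambda_4(s)=-1$, and the same substitution yields $\dim V^{\lambda_3}(G)=n$ and $\dim V^{\lambda_4}(G)=0$. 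This settles parts (a), (b) of Case (i), and, since for $m$ odd only $\lambda_1,\lambda_2$ survive, parts (a), (b) of Case (ii).

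Next the degree-two characters. For every admissible $h$, that is $0<h<m/2$, Table 1 gives $\psi_h(1)=2$ and $\psi_h(sr^{k})=0$ for all $k$; in particular $\psi_h(s)=0$, so
\[
\dim V^{\psi_h}(G)=(\dim V)\,\frac{2}{2}\,(2+0)=2n,
\]
which is part (c) in both parity cases. To see the list is exhausted, I would note that for $m=2k$ the characters $\lambda_1,\lambda_2,\lambda_3,\lambda_4,\psi_1,\dots,\psi_{k-1}$ give all of $\text{Irr}(D_{2m})$, while for $m=2k+1$ the list $\lambda_1,\lambda_2,\psi_1,\dots,\psi_{k}$ does.

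There is essentially no genuine obstacle here: the whole content lives in Theorem \ref{cyclic subspace} and Corollary \ref{dimension}, already distilled into $(\ref{dimD_2m})$. The only thing requiring a moment's care is bookkeeping — aligning the paper's indices $\lambda_1,\dots,\lambda_4$ with the rows of Table 1 and remembering to read each character at $s$ itself rather than at a generic $sr^{k}$, since these disagree whenever the character is nontrivial on $\langle r\rangle$. Once the table's conventions are fixed, each of the six displayed equalities is a one-line evaluation of the formula $(\ref{dimD_2m})$.
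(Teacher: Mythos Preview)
Your proposal is correct and follows exactly the paper's approach: the paper simply states that the result follows from Table 1 and Equation (\ref{dimD_2m}), and you have spelled out precisely those substitutions. There is nothing to add or correct.
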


In the following theorem we give a necessary and sufficient condition for the existence an $O$
-basis for Cartesian symmetry classes  $V^{\psi_h}(G)~(0<h<\frac{m}{2})$. 
\begin{theorem}\label{o-basis D_2m}
Let $G=D_{2m}~(m\geq 3)$ and $\psi=\psi_{h}~(0<h<\frac{m}{2})$. Then $V^{\psi}(G)$ has $O$-basis if and only if $m\equiv 0~(\text{mod}~ 4h_2)$,
where $h_{2}$ is the $2$-part of $h$. 
\end{theorem}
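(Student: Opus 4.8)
The plan is to reduce the existence of an $O$-basis for the whole class $V^{\psi}(G)$ to a single two-dimensional cyclic subspace, to read off the relevant inner products from the character values of $\psi_h$, and finally to translate the resulting trigonometric condition into the stated $2$-adic divisibility.

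Since $D_{2m}$ acts transitively on $\mathbf{I}_m$ we have $\mathcal{D}=\{1\}$, and $G_1=\langle s\rangle$, so $[\psi,1_{G_1}]=\frac12\bigl(\psi(1)+\psi(s)\bigr)=1\neq 0$; thus $\bar{\mathcal{D}}=\{1\}$. By (\ref{ODS}) and Theorem~\ref{cyclic subspace}, $V^{\psi}(G)$ is the orthogonal direct sum of the $n$ cyclic subspaces $V^{\psi}_{i1}$, each of dimension $\psi(1)[\psi,1_{G_1}]=2$. Every standard $\psi$-symmetrized vector $f^{\psi}_{ij}$ lies in $V^{\psi}_{i1}$ (the orbit of $1$ is all of $\mathbf{I}_m$), and $f^{\psi}_{ij}\perp f^{\psi}_{rs}$ whenever $i\neq r$ by Theorem~\ref{p1}, while $\|f^{\psi}_{ij}\|^{2}=2/m\neq 0$ by (\ref{Norm}). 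Hence an orthogonal family of standard symmetrized vectors splits according to the index $i$, and an $O$-basis of $V^{\psi}(G)$ exists if and only if, for one (equivalently every) index $i$, two of the vectors $f^{\psi}_{i1},\dots,f^{\psi}_{im}$ are orthogonal — such a pair being automatically a basis of the $2$-dimensional space $V^{\psi}_{i1}$.

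Next I would compute $\langle f^{\psi}_{i,\,b+1},f^{\psi}_{i,\,a+1}\rangle$. Since $r=(1\,2\,\cdots\,m)$ gives $r^{a}(1)\equiv a+1\pmod m$, the set $\{\sigma\in G:\sigma(1)=a+1\}$ equals $r^{a}G_1=\{r^{a},\,r^{a}s\}$; using $\psi_h(r^{a})=2\cos\frac{2ah\pi}{m}$ and $\psi_h(r^{a}s)=0$, Theorem~\ref{p1} yields
\[
\langle f^{\psi}_{i1},f^{\psi}_{i,\,a+1}\rangle=\frac{\psi(1)}{|G|}\sum_{g\in G_1}\psi_h\!\bigl(gr^{-a}\bigr)=\frac{2}{m}\cos\frac{2ah\pi}{m}.
\]
Since $Q(r^{-b})$ is unitary and sends $f^{\psi}_{i,\,c+1}$ to $f^{\psi}_{i,\,c-b+1}$, this extends to $\langle f^{\psi}_{i,\,b+1},f^{\psi}_{i,\,a+1}\rangle=\frac{2}{m}\cos\frac{2(a-b)h\pi}{m}$. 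Therefore $V^{\psi}(G)$ has an $O$-basis if and only if $\cos\frac{2th\pi}{m}=0$ for some integer $t$.

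The final step is arithmetic: $\cos\frac{2th\pi}{m}=0$ exactly when $\frac{4th}{m}$ is an odd integer. Write $h=2^{\alpha}h'$ and $m=2^{\beta}m'$ with $h',m'$ odd, so that $h_2=2^{\alpha}$. Comparing the exponent of $2$ on the two sides of $4th=(\text{odd})\,m$ forces $v_2(t)=\beta-\alpha-2$ (here $v_2$ is the $2$-adic valuation), which has a solution only if $\beta\geq\alpha+2$; conversely, if $\beta\geq\alpha+2$ then $t=2^{\,\beta-\alpha-2}m'$ gives $\frac{4th}{m}=h'$, an odd integer, so $\cos\frac{2th\pi}{m}=\cos\frac{h'\pi}{2}=0$. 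Hence an $O$-basis exists if and only if $v_2(m)\geq v_2(h)+2$, i.e.\ if and only if $4h_2\mid m$, which is the claim. I expect the only genuinely delicate point to be the reduction in the second paragraph — justifying that no ``mixed'' selection of standard vectors across the summands $V^{\psi}_{i1}$ can produce an $O$-basis — whereas the inner-product computation and the $2$-adic bookkeeping are routine; I would also cross-check consistency with Theorem~\ref{o-basis} at $m=3$ and with the borderline case $m=4$.
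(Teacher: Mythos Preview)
Your argument is correct and follows the same overall route as the paper: reduce via the transitive action to the two-dimensional cyclic subspaces $V^{\psi}_{i1}$, compute the inner products of standard symmetrized vectors from the character values of $\psi_h$, and convert the resulting cosine condition into the $2$-adic divisibility $4h_2\mid m$.

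Your treatment is in fact more complete than the paper's at the key step. The paper checks only the single pair $\{f^{\psi}_{i,r(1)},f^{\psi}_{i,s(1)}\}=\{f^{\psi}_{i2},f^{\psi}_{i1}\}$ and asserts that $\cos\frac{2\pi h}{m}=0$ is equivalent to $m\equiv 0\pmod{4h_2}$; you instead (correctly) quantify over \emph{all} pairs $f^{\psi}_{i,a+1},f^{\psi}_{i,b+1}$, arriving at the condition that $\cos\frac{2th\pi}{m}=0$ for \emph{some} integer $t$, and then carry out the $2$-adic bookkeeping to show this is exactly $4h_2\mid m$. This extra care is not cosmetic: for $m=8$, $h=1$ one has $4h_2=4\mid 8$, yet $\cos\frac{2\pi}{8}=\frac{\sqrt{2}}{2}\neq 0$, so the paper's chosen pair is not orthogonal; the orthogonal pair is $f^{\psi}_{i1},f^{\psi}_{i3}$ (i.e.\ $t=2$), which your argument finds via $t=m/(4h_2)$. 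The reduction you flag as ``delicate'' is fine as you have it, since every $f^{\psi}_{ij}$ lies in $V^{\psi}_{i1}$ and vectors with different first index are orthogonal by Theorem~\ref{p1}, so any $O$-basis necessarily contributes exactly two vectors to each summand.
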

\begin{proof}
Since $G$ acts on $\mathbf{I}_m$ transitively, so $\mathcal{D}=\{1\}$. Clearly, $G_{1}=\{1, s\}$. So $[\psi, 1_{G_1}]=\frac{1}{2}[\psi (1)+\psi (s)]=1\neq 0$. This implies that $1\in \bar{\mathcal{D}}$. But $V^{\psi}(G)=\sum_{1\leq i\leq n}^{\bot}
 V^{\psi}_{i1}$. Notice that that $\dim V^{\psi}_{i1}=2$ and the set $\{f^{\psi}_{ir(1)}, f^{\psi}_{is(1)}\}$ is a basis for it.
By using Theorem \ref{p1}, we have
$$
\langle f^{\psi}_{ir(1)}, f^{\psi}_{is(1)}\rangle=\frac{\psi (1)}{|G|}\sum_{g\in G_{1}}\psi (gsr^{-1})=\frac{1}{m}[\psi (sr^{-1})+\psi (r^{-1})]=
\frac{2}{m}\cos~\frac{2\pi h}{m}.
$$
It is observe that $\langle f^{\psi}_{ir(1)}, f^{\psi}_{is(1)}\rangle =0$ if and only if $\cos~\frac{2\pi h}{m}=0$, or equivalently $m\equiv 0~(\text{mod}~ 4h_2)$,
$h_{2}$ is the $2$-part of $h$. This completes the proof.
\end{proof}
\begin{corollary}
Let $G=D_{2m}$ and assume $\dim V\geq 2$. Then $\times_{1}^{m}V$ has an $O$-basis if and only if $m$ is a power of $2$.
\end{corollary}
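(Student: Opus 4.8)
The plan is to reduce the statement to Theorem~\ref{o-basis D_2m} and then to a short $2$-adic computation. Since $\times^{m}V=\sum^{\bot}_{\lambda\in\mathrm{Irr}(G)}V^{\lambda}(G)$, the space $\times^{m}V$ has an $O$-basis exactly when each summand $V^{\lambda}(G)$, $\lambda\in\mathrm{Irr}(D_{2m})$, does. For the linear characters $\lambda_{j}$ of $D_{2m}$ this is automatic: as recalled in the Introduction, for a linear character the standard symmetrized vectors indexed by $\bar{\mathcal{D}}$ already form an orthogonal basis of $V^{\lambda_{j}}(G)$ (and $V^{\lambda_{j}}(G)=0$ when $\bar{\mathcal{D}}=\varnothing$). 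Hence $\times^{m}V$ has an $O$-basis if and only if $V^{\psi_{h}}(G)$ has an $O$-basis for every $h$ with $0<h<m/2$, which by Theorem~\ref{o-basis D_2m} is equivalent to the divisibility condition
\[
4h_{2}\mid m\qquad\text{for every }h\text{ with }0<h<m/2,
\]
where $h_{2}$ denotes the $2$-part of $h$.

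It therefore suffices to prove that this divisibility condition holds for all admissible $h$ if and only if $m$ is a power of $2$, and I would handle the two implications separately. For the ``if'' direction, write $m=2^{a}$; since $m\geq 3$ we have $a\geq 2$. Any $h$ with $0<h<m/2=2^{a-1}$ satisfies $h_{2}\leq h<2^{a-1}$, and as $h_{2}$ is a power of $2$ this forces $h_{2}\leq 2^{a-2}$, whence $4h_{2}\leq 2^{a}=m$ with both sides powers of $2$, so $4h_{2}\mid m$. For the ``only if'' direction I argue contrapositively: suppose $m$ is not a power of $2$ and write $m=2^{a}t$ with $t\geq 3$ odd and $a\geq 0$. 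Set $h=1$ if $a=0$ and $h=2^{a-1}$ if $a\geq 1$; in both cases $h$ is a power of $2$, so $h_{2}=h$, and using $t\geq 3$ one checks directly that $0<h<m/2$. Then $v_{2}(4h_{2})$ equals $2$ or $a+1$ according as $a=0$ or $a\geq 1$, and in either case this exceeds $v_{2}(m)=a$; hence $4h_{2}\nmid m$, so by Theorem~\ref{o-basis D_2m} the class $V^{\psi_{h}}(G)$ has no $O$-basis and neither does $\times^{m}V$.

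Combining the reduction of the first paragraph with the arithmetic equivalence of the second yields precisely the asserted biconditional. There is no serious obstacle once Theorem~\ref{o-basis D_2m} is in hand; the only point needing a little care --- and the main (mild) difficulty --- is the boundary bookkeeping, namely checking that the chosen $h$ genuinely lies in the open interval $(0,m/2)$ in the small cases $m=3,4$ and across the split $a=0$ versus $a\geq 1$, while keeping fixed the convention that the $2$-part of $h$ is $2^{v_{2}(h)}$.
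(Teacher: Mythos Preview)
Your proposal is correct and follows essentially the same approach as the paper: reduce to Theorem~\ref{o-basis D_2m} via the orthogonal decomposition into symmetry classes, handle linear characters trivially, and then carry out a short $2$-adic argument. The only minor difference is the witness in the contrapositive direction: the paper chooses $h=m_{2}$ (the $2$-part of $m$) uniformly, whereas you split into $a=0$ and $a\geq 1$ and take $h=1$ or $h=2^{a-1}$; both choices work and the verification is equally elementary.
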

\begin{proof}
Suppose $m_{2}$ is the $2$-part of $m$. Assume that $m_2<m$. Then $0<m_2<\frac{m}{2}$ and 
$m\not\equiv 0~(\text{mod}~ 4h_2)$. Hence, if $\psi=\psi_{m_2}$, then Theorem \ref{o-basis D_2m} implies that $V^{\psi}(G)$ has no $O$-basis, and so $\times_{1}^{m}V$.\\
Conversely, assume $m$ is a power of $2$. If $0<h<\frac{m}{2}$, then $h_2\leq \frac{m}{4}$ and $m\equiv 0~(\text{mod}~ 4h_2)$, where $h_2$ is the $2$-part of $h$. Now Theorem \ref{o-basis D_2m} implies that $V^{\psi_{h}}(G)$ has an $O$-basis, and so $\times_{1}^{m}V$.
\end{proof}
\begin{remark}
Let $G=D_{8}\leq S_4$ . Then $G$ has only one non-linear irreducible character $\psi (r^k)=2\cos \frac{k\pi}{2},~\psi (sr^k)=0,~0\leq k<4$. By Theorem \ref{o-basis D_2m}, $V^{\psi}(G)$ has an $O$-basis. Again, by Theorem \ref{o-basis D_2m}, we have
$$
 \parallel f^{\psi}_{ij}\parallel ^{2}=\frac{\psi (1)}{[G:G_1]}[\psi, 1_{G_1}]=\frac{1}{4}[\psi (1)+\psi (s)]=\frac{1}{2}.
$$
So $\parallel f^{\psi}_{ij}\parallel =\frac{\sqrt{2}}{2}$. This examle show that $\frac{\sqrt{2}}{2}$ is the best lower bound for $\parallel f^{\lambda}_{ij}\parallel$ in Theorem \ref{o-basis}. 
\end{remark}
Now we give a counter-example that the converse of Theorem \ref{o-basis} is not true.
\begin{example}
Let $q>3$ be a prime and 
$$G=D_{2q}=
<r^{q}=1=s^{2}\ \text{and}\ s^{-1}r s=r^{-1}>.
$$
Consider $G$ as a subgroup of $S_{q}$. Then the non-linear irreducible characters of $D_{2q}$ are $\psi_h$, where $0<h<q/2$ and $\psi_{h} (r^k)=2\cos \frac{2k\pi h}{q},~\psi_{h} (sr^k)=0,~0\leq k< q$.
Let $\psi=\psi_h,~ 0<h<\frac{q}{2}$. By Theorem \ref{o-basis D_2m}, $V^{\psi}(G)$ has no $O$-basis. But we have 
$$
 \parallel f^{\psi}_{i1}\parallel ^{2}=\frac{\psi (1)}{[G:G_1]}[\psi, 1_{G_1}]=\frac{1}{q}[\psi (1)+\psi (s)]=\frac{2}{q}<\frac{1}{2}.
$$
\end{example}


\end{document}